\newtheorem{theo}{Theorem}
\newtheorem{prop}[theo]{Proposition}
\newtheorem{lemm}[theo]{Lemma}
\newtheorem{coro}[theo]{Corollary}
\newtheorem{rema}[theo]{Remark}
\newtheorem{defi}[theo]{Definition}
\begin{document}
\title{On a conjecture of Matsushita }
\author{Bert van Geemen and Claire Voisin\footnote{This research has been   supported by  The Charles Simonyi Fund and The Fernholz Foundation.}} \maketitle \setcounter{section}{-1}
\section{Introduction}
Let $X$ be a smooth projective hyper-K\"ahler manifold of dimension
$2n$ admitting a Lagrangian fibration
$f:X\rightarrow B$. The smooth  fibers $X_b$ of
$f$ are thus abelian varieties of dimension $n$. When $B$ is smooth, it is known (see \cite{oguiso}) that the restriction map
$$H^2(X,\mathbb{Z})\rightarrow H^2(X_b,\mathbb{Z})
$$
has rank $1$, so that the fibers $X_b$ are in fact canonically polarized by the restriction of any
ample line bundle on $X$. Denoting
by $\alpha$ the type
 of the polarization, we thus have a moduli morphism
 $$ m:B^0\rightarrow \mathcal{A}_{n,\alpha}$$
 where $B^0\subset B$ is the open set parameterizing smooth fibers and $\mathcal{A}_{n,\alpha}$
 is the moduli space of $n$-dimensional abelian varieties with a polarization
 of type $\alpha$.
 It has been conjectured by  Matsushita that $m$ is either generically finite on its image
 or  constant (the second case being the case of isotrivial fibrations).
This conjecture was communicated to us by Ljudmila Kamenova and Misha Verbitsky.
Our goal in this note is to prove the following weakened form of Matsushita's conjecture.
Let $P\subset H^2(X,\mathbb{Z})$ be the N\'eron-Severi group of $X$. One can construct
the universal family $\mathcal{M}_P$ of marked deformations of $X$  with fixed Picard group $P$,
that is deformations
$X_t$ for which all the  classes in $P$ remain Hodge on $X_t$.
 It follows from \cite{voisinlag}  that such deformations  locally preserve
the Lagrangian fibrations on $X$. So deformations parameterized
 by $\mathcal{M}_P$ automatically induce a deformation of the triple $(X,f,B)$, at least on a
 dense Zariski open set of $\mathcal{M}_P$.

\begin{theo}\label{theomatsu} Let $X$ be a projective hyper-K\"ahler manifold of dimension
 $2n$ admitting a Lagrangian fibration $f:X\rightarrow B$, where $B$ is smooth. Assume
 $b_{2,tr}(X):=b_2(X)-\rho(X)\geq5$.
 Then the  deformation $(X',f',B')$ of the triple $(X,f,B)$
 parameterized by a very general point of $\mathcal{M}_P$
satisfies Matsushita's conjecture, that is
the moduli map $m':B' \dashrightarrow \mathcal{A}_{n,\alpha}$ is either constant or generically of maximal rank $n$.

\end{theo}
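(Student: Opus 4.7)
The plan is to argue by contradiction: assume that for a very general $X' \in \mathcal{M}_P$ the moduli map $m'$ has generic rank $k$ with $0 < k < n$, and to derive an incompatibility with the hypothesis $b_{2,tr}(X) \geq 5$.

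First I would translate the rank of $dm'$ into a differential invariant of the fibration, namely the Donagi--Markman cubic form. The Lagrangian condition yields a canonical isomorphism $T_{B',b} \cong H^0(X'_b, \Omega^1_{X'_b})$ at every smooth point $b$ of $f'$, and the Kodaira--Spencer map of $f'$ then becomes a symmetric cubic tensor $c_{X'}$, a global section of $\operatorname{Sym}^3 \Omega^1_{B'}$ over the smooth locus of $f'$, whose contraction $v \mapsto \iota_v c_{X'}$ agrees with $dm'_b$ up to canonical isomorphisms. The assumed intermediate generic rank $k$ therefore means that $c_{X'}$ has pointwise rank $\leq k$ everywhere and equal to $k$ generically, producing a rank-$(n-k)$ kernel distribution $\mathcal{K} \subset T_{B'}$ on a dense open subset.

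Next I would spread this degeneracy across $\mathcal{M}_P$. By \cite{voisinlag} the Lagrangian fibration deforms holomorphically over $\mathcal{M}_P$, so the relative Donagi--Markman cubic is well defined on a dense open subset of the universal family, and the rank condition $\operatorname{rank} c \leq k$ is Zariski closed on $\mathcal{M}_P$; by hypothesis it holds identically. Via the standard identification of $c_{X'}$ with the infinitesimal variation of Hodge structure of $\mathcal{H} := R^1 f'_* \mathbb{Q}$, and using that this VHS embeds canonically into $H^*(X', \mathbb{Q})$ through the Leray/decomposition theorem, the persistent rank-$k$ degeneracy should produce an extra piece of Hodge-theoretic structure on $X'$: either a flat isotropic sub-local-system of $\mathcal{H}$ corresponding to a common isogeny factor of the fibers $X'_b$, or an extra endomorphism of a sub-Hodge-structure of $H^2(X', \mathbb{Q})$. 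This yields a rational $(p,p)$ class on $X'$, or on a natural tensor construction, that does not lie in $P$.

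Finally, the hypothesis $b_{2,tr}(X) \geq 5$ enters through a Zarhin-type statement: for a very general $X' \in \mathcal{M}_P$ the Mumford--Tate group of $H^2_{tr}(X', \mathbb{Q})$ is the full special orthogonal group of the Beauville--Bogomolov form, so this Hodge structure is simple and admits no additional endomorphisms or sub-structures, which contradicts the extra Hodge data produced above. The \emph{main obstacle} I anticipate is precisely this translation: the cubic $c_{X'}$ is naturally a tensor on $B'$, whereas the Mumford--Tate obstruction acts on $H^*(X')$. Producing a genuinely new rational Hodge class on $X'$ out of the pointwise rank deficiency of a cubic form on $B'$ requires combining the deformation-theoretic input of \cite{voisinlag} with the cup-product pairing $\operatorname{Sym}^2 H^1(X'_b, \mathcal{O}_{X'_b}) \to H^2(X'_b, \mathcal{O}_{X'_b})$ and its globalization via the Leray spectral sequence; without such a bridge, the Mumford--Tate rigidity has nothing to act on.
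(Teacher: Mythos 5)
Your reduction to the statement that for very general $X'\in\mathcal{M}_P$ the Mumford--Tate group of $H^2(X',\mathbb{Q})_{tr}$ is all of $SO(q)$ matches the paper (Lemma \ref{theoMT} and Corollary \ref{ledefMT}), and your final contradiction target is the right one. But the central step --- converting the assumed generic rank $0<k<n$ of $m'$ into a rational Hodge-theoretic object that the maximal Mumford--Tate group can see --- is exactly the step you defer, and the sketch you give for it does not work as stated. The kernel distribution $\mathcal{K}\subset T_{B'}$ of the Donagi--Markman cubic is a holomorphic sub-bundle that is a priori neither integrable nor defined over $\mathbb{Q}$ in any local system; it does not by itself yield a flat rational sub-local-system of $R^1f'_*\mathbb{Q}$, nor an extra endomorphism of a sub-Hodge structure of $H^2(X',\mathbb{Q})$. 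Moreover, even granting such a sub-local-system, the weight-one variation $R^1f'_*\mathbb{Q}$ is not a sub-object of the tensor algebra of $H^2(X',\mathbb{Q})$ in any way that lets $MT(H^2(X',\mathbb{Q})_{tr})$ act on it directly; so what you flag as the ``main obstacle'' is in fact the entire content of the theorem, and the cup-product/Leray bridge you gesture at does not supply it.

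For contrast, the paper's mechanism never uses the cubic. It uses that over a positive-dimensional fiber $F_b$ of $m$ the family is isotrivial, hence splits as $J_b\times U'$ after a generically finite base change; pulling back $H^2(X,\mathbb{Q})_{tr}$ and projecting to the K\"unneth factor $H^1(J_b,\mathbb{Q})\otimes H^1(F'_b,\mathbb{Q})$ gives an injective morphism of Hodge structures (injectivity uses the simplicity of $H^2(X,\mathbb{Q})_{tr}$ together with the Lagrangian condition, which forces the $(2,0)$-form to survive in the K\"unneth component). This yields an embedding $H^2(X,\mathbb{Q})_{tr}\subset A_i\otimes H^1(F'_b,\mathbb{Q})$ with $A_i$ a simple weight-one factor of $H^1(J_b,\mathbb{Q})$. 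The Mumford--Tate hypothesis then enters through Proposition \ref{propvgeemen}, i.e.\ through Deligne's classification of the representations of $so(q)$ that can occur on a weight-one Hodge structure: $A_i$ must be a factor of $H^1$ of the Kuga--Satake variety of $H^2(X,\mathbb{Q})_{tr}$. Since the Kuga--Satake variety is independent of $b$ and has only finitely many isogeny classes of abelian subvarieties, the resulting isogeny factor of $J_b$ is independent of $b$, producing a sub-abelian fibration $\mathcal{T}\subset X_V$ over a dense open set of $B$; Oguiso's theorem ${\rm NS}(X_V/V)=\mathbb{Z}$ then forces $\mathcal{T}=X_V$, i.e.\ isotriviality. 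None of these ingredients (the product splitting, the Kuga--Satake universal property, the finiteness of isogeny classes, Oguiso's theorem) appears in your outline, and without them the argument does not close.
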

\begin{coro} In the
 space $\mathcal{M}_P$ of   deformations of $X$ with
constant N\'eron-Severi  group, either there is a dense Zariski
 open set of points parameterizing
triples $(X',f',B')$ for which the moduli map has maximal rank $n$, or
for any point of $\mathcal{M}_P$, the moduli map is constant.

\end{coro}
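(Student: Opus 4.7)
The plan is to upgrade the ``very general'' dichotomy provided by Theorem~\ref{theomatsu} to the global dichotomy of the corollary by exploiting the opposite topological nature of its two alternatives: the condition ``$m'_t$ is generically of maximal rank $n$'' is Zariski open in $t \in \mathcal{M}_P$, while the condition ``$m'_t$ is constant'' is Zariski closed.

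Over the dense Zariski open of $\mathcal{M}_P$ on which the triple $(X',f',B')$ deforms as a family (as recalled just before the theorem), the moduli maps $m'_t$ fit into a relative morphism whose generic rank along the fibers is lower semicontinuous in the parameter $t$. Hence the locus
\[
U := \{t \in \mathcal{M}_P : m'_t \text{ is generically of rank } n\}
\]
is Zariski open in $\mathcal{M}_P$, while the locus
\[
C := \{t \in \mathcal{M}_P : m'_t \text{ is constant}\}
\]
is Zariski closed in $\mathcal{M}_P$. Theorem~\ref{theomatsu} asserts that $U \cup C$ contains a very general point of $\mathcal{M}_P$, so that its complement is contained in a countable union of proper closed analytic subsets.

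The corollary now splits according to whether $U$ is empty. If $U \neq \emptyset$, then by irreducibility of $\mathcal{M}_P$ it is a Zariski dense open set, giving the first alternative. If $U = \emptyset$, then $C$ alone contains a very general point of $\mathcal{M}_P$; being Zariski closed in the irreducible space $\mathcal{M}_P$, it must coincide with all of $\mathcal{M}_P$, yielding the second alternative.

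The only step requiring real care is the lower semicontinuity of the generic rank of $m'_t$ in the analytic family $\mathcal{M}_P$. Concretely, one must check that the set of $t$ for which the relative differential of $m'_t$ attains rank $n$ at some point of $B'_t$ is open in $\mathcal{M}_P$; this is standard, being cut out by the non-vanishing of suitable minors of a matrix of holomorphic functions depending on $t$. Once this semicontinuity is in hand, the argument above is essentially formal.
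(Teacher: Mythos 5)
Your argument is correct and follows essentially the same route as the paper, which disposes of the corollary in one line by invoking the Zariski openness of the condition ``$m$ is generically of maximal rank''; you simply make explicit the semicontinuity of the generic rank that underlies both the openness of $U$ and the closedness of $C$, together with the standard Baire-type argument that a closed set containing the very general point of the irreducible space $\mathcal{M}_P$ must be everything.
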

This follows indeed from the fact that the condition that $m$ is generically of maximal rank
is Zariski open.
\begin{rema}{\rm The assumption $b_2(X)-\rho(X)\geq5$ in Theorem
\ref{theomatsu} is presumably not essential here, but
some more arguments would be needed otherwise. It is related to the simplicity of the orthogonal groups. Note also that no compact hyper-K\"ahler manifold with $b_2<5$ is known, so in practice, this does not seem to be very restrictive.}
\end{rema}
\begin{rema}{\rm Concerning the assumption that $B$ is smooth, it is believed that it always holds.
Matsushita \cite{matsu1}, \cite{matsu2} proved a number of results on the geometry and topology of the base
$B$ suggesting that it must be isomorphic to $\mathbb{P}^n$, and Hwang proved this is the case if
it is smooth.
}
\end{rema}

Our proof  will use the fact that the very general
point of $ \mathcal{M}_P$  parameterizes a deformation $X'$ of $X$ for which
the Mumford-Tate group of the Hodge structure on
$$H^2(X',\mathbb{Q})_{tr}=H^2(X',\mathbb{Q})^{\perp P}\cong H^2(X,\mathbb{Q})^{\perp P}$$
is the full special orthogonal group of the $\mathbb{Q}$-vector space
$H^2(X',\mathbb{Q})_{tr}$  equipped
with the Beauville-Bogomolov intersection form (see Section \ref{secMT}).
Theorem \ref{theomatsu} will be then a consequence of  the following more precise result:
\begin{theo} \label{theomatsuMT} Let $X$ be a projective hyper-K\"ahler manifold of dimension
 $2n$ admitting a Lagrangian fibration $f:X\rightarrow B$ with $B$ smooth. Assume
 $b_{2,tr}(X):=b_2(X)-\rho(X)\geq5$ and the Mumford-Tate group of the Hodge structure on
 $H^2(X,\mathbb{Q})$ is the group $SO(H^2(X,\mathbb{Q})_{tr},q)$.
 Then  the pair $(X,f)$ satisfies Matsushita's conjecture.

\end{theo}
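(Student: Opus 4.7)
The Mumford--Tate hypothesis makes $H^{2}(X,\mathbb{Q})_{tr}$ an irreducible rational weight-two Hodge structure. The strategy is to show that if the moduli map $m$ had generic rank $r$ with $0<r<n$, one could extract a proper non-trivial Hodge sub-structure of $H^{2}(X,\mathbb{Q})_{tr}$, contradicting this irreducibility. The extreme cases $r=0$ and $r=n$ are precisely those allowed by Matsushita's conjecture.

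I would first analyse the infinitesimal variation of the family of abelian varieties $X_{b}$. Using the holomorphic symplectic form, the Lagrangian condition identifies $T_{b}B\cong H^{1}(X_{b},\mathcal{O}_{X_{b}})$ at each $b\in B^{0}$, and the Kodaira--Spencer map of the family of polarized abelian varieties becomes the symmetric linear map
$$\rho_{b}:T_{b}B\longrightarrow \operatorname{Sym}^{2}H^{1}(X_{b},\mathcal{O}_{X_{b}})$$
of Donagi--Markman, whose rank is $\operatorname{rk}dm_{b}$. The intermediate-rank hypothesis then produces a positive-corank sub-bundle $\mathcal{K}\subset T_{B^{0}}$ along which the Kodaira--Spencer map of $\mathbb{V}:=R^{1}f_{*}\mathbb{Q}|_{B^{0}}$ vanishes. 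Rigidity and semisimplicity for polarized variations of Hodge structure then promote this infinitesimal triviality to a proper non-trivial decomposition $\mathbb{V}\sim \mathbb{V}_{1}\oplus \mathbb{V}_{2}$ of polarized $\mathbb{Q}$-VHS on a dense open of $B^{0}$ (possibly after an \'etale cover), with $\mathbb{V}_{1}$ isotrivial along the leaves of $\mathcal{K}$.

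Next I would transport this to $H^{2}(X,\mathbb{Q})_{tr}$. Since the restriction $H^{2}(X)\to H^{2}(X_{b})$ has rank one with image of Hodge type $(1,1)$, the $(2,0)$-part of $H^{2}(X)$ restricts to zero on fibres; this $(2,0)$-part lies in $H^{2}(X,\mathbb{Q})_{tr}\otimes\mathbb{C}$, and MT-irreducibility then forces the whole transcendental part into the first Leray step $L^{1}H^{2}(X)$. Hwang's theorem identifies $B$ with $\mathbb{P}^{n}$, hence $L^{2}H^{2}(X)=f^{*}H^{2}(B,\mathbb{Q})\subset P$ meets $H^{2}(X,\mathbb{Q})_{tr}$ trivially; combined with the $E_{2}$-degeneration of the Leray spectral sequence (Matsushita), this yields an injection of pure weight-two Hodge structures
$$H^{2}(X,\mathbb{Q})_{tr}\hookrightarrow H^{1}(B,R^{1}f_{*}\mathbb{Q}).$$
By the decomposition theorem, the splitting of $\mathbb{V}$ extends to $R^{1}f_{*}\mathbb{Q}$ on $B$ and induces a direct sum decomposition of Hodge structures $H^{1}(B,R^{1}f_{*}\mathbb{Q})=H^{1}(B,\mathcal{V}_{1})\oplus H^{1}(B,\mathcal{V}_{2})$.

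The main obstacle is the final step: converting this target splitting into an actual non-trivial sub-Hodge structure of $H^{2}(X,\mathbb{Q})_{tr}$. Irreducibility only forces each projection of $H^{2}(X,\mathbb{Q})_{tr}$ onto $H^{1}(B,\mathcal{V}_{i})$ to be zero or injective, and the degenerate case in which the injection lands entirely in one summand must be ruled out. The plan is to compare Mumford--Tate groups: the summands $H^{1}(B,\mathcal{V}_{i})$ have MT groups constrained by the monodromy and Hodge-type of the $\mathcal{V}_{i}$, and for $b_{2,tr}\geq 5$ the simplicity of $\mathfrak{so}(b_{2,tr})$ prevents the full $SO(H^{2}(X,\mathbb{Q})_{tr},q)$ from being realised as acting faithfully and irreducibly on a single proper summand. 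This is where the hypothesis $b_{2,tr}\geq 5$ is essential, and it closes the contradiction, forcing $r\in\{0,n\}$.
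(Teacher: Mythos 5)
There is a genuine gap, and it sits exactly where the paper's main new input lives. Your step ``rigidity and semisimplicity for polarized VHS then promote this infinitesimal triviality to a proper non-trivial decomposition $\mathbb{V}\sim\mathbb{V}_1\oplus\mathbb{V}_2$'' is unjustified. Vanishing of the Kodaira--Spencer map along the distribution $\mathcal{K}$ only tells you that the family of abelian varieties is isotrivial along each leaf (each fibre $F_b$ of $m$); it does not split the local system over $B^0$, because a priori the ``constant'' abelian variety attached to the leaf through $b$ varies with $b$. Showing that it does not vary is the heart of the matter, and it is precisely where the Mumford--Tate hypothesis is used in the paper: restricting to one leaf, one splits the base-changed family as $J_b\times U'$, embeds $H^2(X,\mathbb{Q})_{tr}$ into $H^1(J_b,\mathbb{Q})\otimes H^1(F'_b,\mathbb{Q})$ via K\"unneth, and then invokes the universal property of the Kuga--Satake construction (Proposition \ref{propvgeemen}, which is where $b_{2,tr}\geq 5$ and the simplicity of $so(q)$ actually enter) to conclude that some simple factor $A_i$ of $H^1(J_b,\mathbb{Q})$ is a summand of $H^1(A_{KS}(X),\mathbb{Q})$. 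Since $A_{KS}(X)$ has only finitely many isogeny classes of abelian subvarieties, the resulting isogeny factor $T$ of $J_b$ is independent of the general $b$, and only then does one get a global (isogeny) splitting $X_V\sim\mathcal{S}\times_V\mathcal{T}$ with $\mathcal{T}$ isotrivial. Your proposal never produces this uniformity, so the decomposition you feed into the Leray/decomposition-theorem machinery is not available.

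The endgame is also off target. Ruling out the case where $H^2(X,\mathbb{Q})_{tr}$ injects into a single summand $H^1(B,\mathcal{V}_i)$ would not give a contradiction anyway: an irreducible Hodge structure embeds diagonally into a direct sum without any difficulty, so no contradiction with irreducibility or with the Mumford--Tate group acting on ``a single proper summand'' is forthcoming. The paper's conclusion is not that intermediate rank is self-contradictory via Hodge theory of $H^2$; it is that positive-dimensional fibres of $m$ force the splitting $X_V\sim\mathcal{S}\times_V\mathcal{T}$ above, and then Oguiso's theorem that the relative N\'eron--Severi group of $X_V/V$ has rank one kills the possibility ${\rm dim}\,\mathcal{S}/V>0$ (two independent relative divisor classes would arise from the two factors). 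Hence $X_V=\mathcal{T}$ and the fibration is isotrivial, i.e.\ $m$ is constant. You would need to replace your final Mumford--Tate comparison by an argument of this kind, and to supply the Kuga--Satake step to make the splitting exist in the first place.
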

The  proof of Theorem \ref{theomatsuMT} will be obtained as a consequence of the following
proposition (cf. Proposition \ref{propvgeemen}) establishing a universal property
of the Kuga-Satake construction (see \cite{kugasatake}, \cite{deligne2}, \cite{vangeemen}):
\begin{prop} Let $(H,q, H^{p,q})$ be a weight $2$ polarized Hodge structure of $K3$ type, that is, such that
$h^{2,0}=1$. Assume that ${\rm dim}\,H\geq 5$ and the Mumford-Tate group of $(H,H^{p,q})$ is the special
orthogonal group of of $(H,q)$. Then
for any irreducible weight $1$ Hodge structure such that, for some weight $1$ Hodge structure
$H_2$, there
is an embedding of weight $2$ Hodge structures
$$H\subset H_1\otimes H_2,$$
$H_1$ is isomorphic to an irreducible weight $1$ sub-Hodge structure of
$H^1(A_{KS}(H),\mathbb{Q})$, where $A_{KS}(H)$ is the Kuga-Satake variety of $(H,q, H^{p,q})$.
\end{prop}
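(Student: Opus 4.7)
The plan is to extract from the weight-$2$ embedding $H\hookrightarrow H_1\otimes H_2$, combined with the hypothesis that the Mumford--Tate group of $H$ is $SO(H,q)$, enough representation-theoretic constraints on $H_1$ to identify it with a half-spin representation of $\operatorname{Spin}(H,q)$ endowed with the Kuga--Satake Hodge filtration. Recall that $A_{KS}(H)$ is constructed so that $H^1(A_{KS}(H),\mathbb{Q})$ is (a sum of copies of) the even Clifford algebra $C^+:=C^+(H,q)$, endowed with the weight-$1$ Hodge structure coming from Clifford left multiplication by a generator of $H^{2,0}$. As a $\operatorname{Spin}(H,q)$-module, $C^+$ decomposes into the two half-spin representations $S^+\oplus S^-$, and the essential functorial property of the construction is the weight-$2$ Hodge embedding $H\hookrightarrow C^+\otimes(C^+)^{\vee}(-1)$ induced by Clifford multiplication.

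The first step is to set up the Mumford--Tate formalism for the given data. Let $G$ be the Mumford--Tate group of $H_1\oplus H_2$, a connected reductive $\mathbb{Q}$-algebraic group; the inclusion $H\subset H_1\otimes H_2$ of Hodge structures yields a surjection $G\twoheadrightarrow SO(H,q)$. Using that $\dim H\geq 5$, so that $SO(H,q)$ is (almost) simple, one can pass to a suitable cover and derived subgroup of $G$ to obtain a compatible action of $\operatorname{Spin}(H,q)$ on both $H_1$ and $H_2$. The Hodge-theoretic irreducibility of $H_1$ translates to irreducibility as a $G$-module, so the center $\{\pm 1\}\subset\operatorname{Spin}(H,q)$ acts on $H_1$ by a single sign; the same holds for $H_2$, and since the vector representation $H$ has trivial central character, the signs on $H_1$ and $H_2$ must agree.

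The representation-theoretic heart of the proof is then to rule out the case where both signs are $+1$ (so that both $H_i$ factor through $SO(H,q)$) and, in the remaining case, to pin down $H_1$ uniquely. In the trivial case a highest-weight argument, using that the standard representation $H$ of $SO(H,q)$ only appears in tensor products where a half-integral weight already occurs, excludes the possibility that $H$ embeds in $H_1\otimes H_2$. In the nontrivial case, Clifford multiplication yields the decomposition $H\otimes S^{\pm}\simeq S^{\mp}\oplus(\text{higher})$; combined with the irreducibility of $H_1$ and the constraint $H\hookrightarrow H_1\otimes H_2$, this forces $H_1\simeq S^{+}$ or $S^{-}$. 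The weight-$1$ Hodge filtration on $H_1$ is then determined by how the Hodge cocharacter of $H$ acts on the chosen half-spin module, and this is by construction the Kuga--Satake filtration on the corresponding summand of $C^+$; hence $H_1$ is isomorphic to an irreducible sub-Hodge structure of $H^1(A_{KS}(H),\mathbb{Q})$. The hardest step, and where the hypothesis $\dim H\geq 5$ enters decisively through the (almost) simplicity of $SO(H,q)$, is this central-character analysis and the ensuing representation-theoretic identification of $H_1$; some extra care is needed to track Tate twists so that the Hodge filtration, and not merely the representation type of $H_1$, is pinned down.
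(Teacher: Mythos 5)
Your overall strategy is the same as the paper's (lift the $SO(H,q)$-action to $\operatorname{Spin}(H,q)$, classify the possible representations on $H_1$, identify $H_1$ with a half-spin module carrying the Kuga--Satake Hodge filtration), but there is a genuine gap at the classification step: nowhere do you use the fact that $H_1$ and $H_2$ are \emph{weight $1$ polarized Hodge structures}. This is the decisive input. Because the Hodge cocharacter acts on a weight $1$ structure with exactly two weights, the irreducible $so(q)_{\mathbb{C}}$-modules that can occur in $H_1$ and $H_2$ are only the minuscule ones --- the spin representation when $\dim H$ is odd, and the standard or the two half-spin representations when $\dim H$ is even. This is Deligne's classification, which the paper invokes explicitly. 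Your central-character dichotomy alone cannot substitute for it: there are infinitely many irreducible $\operatorname{Spin}(H,q)$-modules with either central character, and the mere existence of an embedding $H\subset H_1\otimes H_2$ with $H_1$ irreducible does not force $H_1\simeq S^{\pm}$ (for any spinorial irreducible $W$ one can find an irreducible constituent $W'$ of $W\otimes H$ with $H\subset W\otimes (W')^{\vee}$, suitably twisted). So the sentence ``combined with the irreducibility of $H_1$ \dots this forces $H_1\simeq S^+$ or $S^-$'' does not follow from what you have established.

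The same omission makes your exclusion of the trivial-central-character case incorrect as stated. It is false that the standard representation ``only appears in tensor products where a half-integral weight already occurs'': for instance $H\subset H\otimes\Lambda^2H$ with both factors of integral weight. The case of two $SO$-factorable representations is excluded only because, by the minuscule constraint, the sole integral-weight option for each of $H_1,H_2$ is the standard representation, and one then checks directly that $H\otimes H$ (and $H\otimes S^{\pm}$) contains no copy of $H$ --- which is exactly how the paper argues. Two smaller points you gloss over, which the paper treats: (i) one must rule out that the Mumford--Tate Lie algebra of $H_1\oplus H_2$ contains \emph{two} independent copies of $so(q)$, one acting on each factor (the paper does this using $h^{2,0}(H)=1$ together with a result of Hazama), before one even has a single copy of $\operatorname{Spin}(H,q)$ acting compatibly on both; and (ii) the identification of $H_1$ with a summand of $H^1(A_{KS}(H),\mathbb{C})$ is a priori only over $\mathbb{C}$, and one needs the observation that the space of $so(q)$-equivariant maps $H_1\to H^1(A_{KS}(H),\mathbb{Q})$ is defined over $\mathbb{Q}$ to descend to a rational isomorphism of Hodge structures.
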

\begin{rema}{\rm This implies  that there
is a finite and in particular discrete set of such Hodge structures
$H_1$. The condition on the Mumford-Tate group of $H$ is  quite essential here.
We will give in the last section an example of a $K3$ type polarized Hodge
 structure $H$ for which  there is  a continuous family of irreducible weight $1$ Hodge structures
$H_1$ satisfying the conditions above.
}
\end{rema}
{\bf Thanks.} {\it The second author would like to thank Ljudmila Kamenova for bringing
Matsushita's
conjecture to her attention and also  for interesting discussions and useful comments on a version of this note.
}

\section{Mumford-Tate groups and the Kuga-Satake construction \label{secMT}}
Let $(H, H^{p,q})$ be a rational Hodge structure of weight $k$. The
group $\mathbb{S}^1$ acts on $H_\mathbb{R}$ by the following rule:
$z\cdot\alpha^{p,q}=z^p\overline{z}^q\alpha^{p,q}$ for $z\in
\mathbb{S}^1$ and $\alpha^{p,q}\in H^{p,q}\subset H_\mathbb{C}$.
\begin{defi} The Mumford-Tate group of $H$ is the smallest algebraic subgroup
of $Gl(H)$ which is  defined over $\mathbb{Q}$ and contains the
image of $\mathbb{S}^1$.

\end{defi}
Let $X$ be a compact hyper-K\"ahler manifold. Consider the Hodge structure
of weight $2$ on $H^2(X,\mathbb{Q})$. It is compatible with the
Beauville-Bogomolov intersection form $q$ (by the first
Hodge-Riemann bilinear relations), so that its Mumford-Tate group is
contained in $SO(q)$. We now have:
\begin{lemm} \label{theoMT} Let $P\subset {\rm NS}(X)\subset H^2(X,\mathbb{Q})$ be a subspace
which contains an ample class (so that the Beauville-Bogomolov form
is nondegenerate of signature $(1,l)$ on $P$). Then for a very
general marked deformation $X'$ of $X$ for which $P\subset {\rm NS}(X')$, the
Mumford-Tate group of the Hodge structure on
$H^2(X',\mathbb{Q})_{tr}$ is the whole special  orthogonal group
$SO(H^2(X',\mathbb{Q})_{tr},q)$.
\end{lemm}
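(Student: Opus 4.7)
The plan is to reduce the statement, via local Torelli, to a standard density fact about Mumford-Tate groups in the period domain of polarized K3-type Hodge structures.

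Set $T := P^{\perp} \subset H^2(X,\mathbb{Q})$ with the restriction $q_T$ of the Beauville-Bogomolov form $q$. Since $P$ has signature $(1,l)$ and $q$ has signature $(3, b_2-3)$, the form $q_T$ has signature $(2,\dim T - 2)$, and the polarized weight-$2$ K3-type period domain
$$\mathcal{D}_T = \{[\omega] \in \mathbb{P}(T_{\mathbb{C}}) : q_T(\omega,\omega)=0,\; q_T(\omega,\bar{\omega})>0\}$$
is a connected complex manifold. The universal family $\mathcal{M}_P$ carries a period map $\mathcal{P}: \mathcal{M}_P \to \mathcal{D}_T$ sending a marked deformation $X'$ to the line $H^{2,0}(X') \subset T_{\mathbb{C}}$; by the local Torelli theorem for hyper-K\"ahler manifolds, $\mathcal{P}$ is a local isomorphism onto an open subset of $\mathcal{D}_T$. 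It therefore suffices to show that for a very general $\omega \in \mathcal{D}_T$, the Mumford-Tate group of the Hodge structure $(T,\omega)$ equals the whole $SO(T, q_T)$.

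Any polarized K3-type Hodge structure on $(T, q_T)$ has MT group contained in $SO(T, q_T)$. I would then argue that for each proper $\mathbb{Q}$-algebraic subgroup $G \subsetneq SO(T, q_T)$ stabilizing $q_T$, the locus
$$\mathcal{D}_T^G := \{\omega \in \mathcal{D}_T : MT(T,\omega) \subset G\}$$
is a proper closed analytic subvariety of $\mathcal{D}_T$: it can be cut out as the Hodge locus of any finite family of tensors in $T^{\otimes a}\otimes T^{*\otimes b}$ that are $G$-invariant but not $SO(T, q_T)$-invariant, and such tensors exist because $G \neq SO(T, q_T)$. Since $SO(T, q_T)$ admits only countably many $SO(T, q_T)(\mathbb{Q})$-conjugacy classes of such subgroups $G$, the union $\bigcup_G \mathcal{D}_T^G$ is a countable union of proper closed analytic subvarieties of $\mathcal{D}_T$, and its complement is very general. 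Pulling back by $\mathcal{P}$ yields the lemma.

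The only genuinely nontrivial input is the \emph{properness} of each $\mathcal{D}_T^G$, which is equivalent to the existence of at least one $\omega \in \mathcal{D}_T$ whose Mumford-Tate group is the full $SO(T, q_T)$ — in other words, to the fact that $\mathcal{D}_T$ is itself the Mumford-Tate domain attached to $SO(T, q_T)$. This is the main obstacle; I would handle it either by invoking the standard description of classical period domains as Mumford-Tate domains, or by a direct Lie-algebra argument showing that at a sufficiently general point the Lie algebra $\mathfrak{so}(T, q_T)$ is generated, as a $\mathbb{Q}$-algebraic object, by the $\mathbb{S}^1$-action together with its translates, so that no proper $\mathbb{Q}$-subgroup of $SO(T, q_T)$ can contain the image of $\mathbb{S}^1$.
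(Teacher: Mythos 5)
Your reduction is sound as far as it goes: passing to the period domain $\mathcal{D}_T$ via local Torelli, and writing the locus of non-maximal Mumford--Tate group as a countable union of closed analytic Hodge loci of rational tensors (there are countably many $\mathbb{Q}$-algebraic subgroups, and reductivity of Mumford--Tate groups lets you replace each one by an invariant tensor) is the standard first step; the paper's proof opens with the same countability argument, phrased as ``the Mumford--Tate group at the very general point contains the Mumford--Tate group at every point of $\mathcal{D}_P^0$''. But you then correctly identify that everything hinges on the \emph{properness} of each locus $\mathcal{D}_T^G$, i.e.\ on exhibiting at least one point of $\mathcal{D}_T$ whose Mumford--Tate group is all of $SO(T,q_T)$ --- and there you stop. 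Saying you ``would handle it either by invoking the standard description of classical period domains as Mumford--Tate domains, or by a direct Lie-algebra argument'' is not a proof: the first option is a real theorem that needs a precise citation and is essentially equivalent to the lemma itself, while the second, as you phrase it (``no proper $\mathbb{Q}$-subgroup of $SO(T,q_T)$ can contain the image of $\mathbb{S}^1$''), is circular --- at special points of $\mathcal{D}_T$ (e.g.\ CM points) the image of $\mathbb{S}^1$ \emph{is} contained in proper $\mathbb{Q}$-subgroups, and excluding this at a very general point is exactly the assertion to be proved. So the central step of the lemma is missing.

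The paper supplies precisely this content by induction on $\dim T$. The base case $\dim T=2$ is checked by hand: the Mumford--Tate group is the Deligne torus, which is the full special orthogonal group of a rank-two space. For the inductive step, openness of the period map gives a strong form of Green's density theorem: there is a nonempty open cone $V\subset T_{\mathbb{R}}$ such that for every rational $\lambda\in V$ the Noether--Lefschetz locus $NL_\lambda$ meets $\mathcal{D}_P^0$. Applying the induction hypothesis to $\langle P,\lambda\rangle$ produces points of $NL_\lambda$ with Mumford--Tate group $SO(\langle P,\lambda\rangle^{\perp},q)$; by the countability argument the Mumford--Tate group at the very general point contains all these subgroups as $\lambda$ ranges over the rational points of the open set $V$, and together they generate $SO(T,q_T)$. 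To complete your argument you would need either to carry out such a construction of one maximal point, or to give a precise reference for the fact that the generic Mumford--Tate group of the period domain $\mathcal{D}_T$ is the full orthogonal group.
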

\begin{rema}{\rm Note that the fact that the period map for hyper-K\"ahler manifolds
is open implies that for $X'$ as above, $H^2(X',\mathbb{Q})_{tr}$ is
nothing but the orthogonal complement of $P$ in $H^2(X',\mathbb{Q})$
with respect to $q$.
}
\end{rema}
\begin{proof}[Proof of lemma \ref{theoMT}] Via the period map,  the marked deformations $X_t$ of $X$ for which
$P\subset {\rm NS}(X_t)$ are parameterized by an open set $\mathcal{D}_P^0$ in the period
domain
$$\mathcal{D}_P=\{\sigma_t\in \mathbb{P}(H^2(X,\mathbb{C})^{\perp P}),\,q(\eta_t)=0,\,q(\eta_t,\overline{\eta}_t)>0\}.
$$
For such a period point $\sigma_t$, the Mumford-Tate group
$MT(H^2(X_t,\mathbb{Q}))$ is the subgroup leaving invariant
all the Hodge classes in the induced Hodge structures on the tensor powers
$\bigotimes H^2(X_t,\mathbb{Q})$.
 For each such class $\alpha$, either $\alpha$ remains a
Hodge class everywhere on the family, or the locus where it is  a Hodge class is a
closed proper analytic subset of the period domain. As there are countably
many such Hodge classes, it follows that the Mumford-Tate group
for the very general fiber $X'$ of the family contains the Mumford-Tate groups of
$H^2(X_t,\mathbb{Q})$ for all
 $t\in \mathcal{D}_P^0$.
 We then argue by induction on ${\rm dim}\,H^2(X,\mathbb{Q})^{\perp P}$.
 If ${\rm dim}\,H^2(X,\mathbb{Q})^{\perp P}=2$, then it is immediate to check that
$MT(H^2(X,\mathbb{Q}))$ is the Deligne torus itself, which is equal to
$SO(H^2(X_t,\mathbb{Q})^{\perp P})$. Suppose now that we proved the result
for ${\rm dim}\,H^2(X,\mathbb{Q})^{\perp P}=k-1$ and assume ${\rm dim}\,H^2(X,\mathbb{Q})^{\perp P}=k\geq 3$.
First of all, we easily see that the  strong form of Green's theorem on the density of the Noether-Lefschetz
locus holds, by which we  mean  the following statement:

 {\it There exists a non-empty open set
$V\subset H^2(X,\mathbb{R})^{\perp P}$ such that for any
$\lambda \in V\cap H^2(X,\mathbb{Q})^{\perp P}$, the Noether-Lefschetz locus
$$NL_\lambda\cap \mathcal{D}_P^0=:\{t\in \mathcal{D}_P^0, \,\lambda\in H^{1,1}(X_t)\}=\{t\in \mathcal{D}_P^0, \,q(\sigma_t,\lambda)=0\}$$
is nonempty.}

This is of course a consequence of the  Green density theorem (see \cite[17.3.4]{voisinbook}), but in our case where the period map is open, this is immediate, since letting
 $\sigma\in \mathcal{D}_P^0$ be the period point of $X$, for any open set $U\subset \mathcal{D}_P^0$ containing $\sigma$ and contained in the image of the period map, and
for any $\lambda \in  H^2(X,\mathbb{Q})^{\perp P}$, one has
$U\cap NL_\lambda=\{\sigma_t\in U,\,q(\sigma_t,\lambda)=0\}$, which means equivalently that
$\lambda\in H^2(X,\mathbb{Q})^{\perp <P,\sigma_t>}$ and by taking complex conjugates,
$$\lambda\in H^2(X,\mathbb{Q})^{\perp <P,\sigma_t,\overline{\sigma_t}>}.$$
But clearly, $\cup_{\sigma_t\in U}H^2(X,\mathbb{R})^{\perp <P,\sigma_t,\overline{\sigma_t}>}$ is an
open subset of
 $H^2(X,\mathbb{R})$. We thus can take for $V$ this open set.

For any $t\in NL_\lambda\cap \mathcal{D}_P^0$, the rational subspace
$<P,\lambda>\subset H^2(X_t,\mathbb{Q})$ is contained in ${\rm NS}(X_t)_\mathbb{Q}$
and applying the induction hypothesis, we conclude that for the very general point
$X'_\lambda$ of $NL_\lambda\cap \mathcal{D}_P^0$, the Mumford-Tate group of
$H^2(X'_\lambda,\mathbb{Q})$ is equal to $SO(H^2(X'_\lambda,\mathbb{Q})^{\perp <\lambda,P>},q)$ (and acts as the identity on $<\lambda,P>$).

By the previous argument, we then conclude that for the very general point $X'$ of
$\mathcal{D}_P^0$, the Mumford-Tate group
$MT(H^2(X',\mathbb{Q}))$ contains the orthogonal groups $SO(H^2(X'_\lambda,\mathbb{Q})^{\perp <\lambda,P>},q)$ for any $\lambda \in V\cap H^2(X,\mathbb{Q})^{\perp P}$. As $V$ is open in
$H^2(X,\mathbb{R})^{\perp P}$,
it immediately follows that $MT(H^2(X',\mathbb{Q}))$ is equal to the orthogonal group $SO(H^2(X',\mathbb{Q})^{\perp P},q)$.
\end{proof}

Let now $X$ be a hyper-K\"{a}hler manifold admitting a Lagrangian
fibration $X\rightarrow B$. Let $P:={\rm NS}(X)$. We get the following:

\begin{coro} \label{ledefMT} There exists a (small) deformation $X'$ of $X$
which  is projective with N\'eron-Severi group
 $P$, admits a Lagrangian fibration
$X'\rightarrow B'$ deforming the Lagrangian fibration of $X$,
and such that the Mumford-Tate group of $H^2(X',\mathbb{Q})$ is equal to $SO(H^2(X',\mathbb{Q})^{\perp P},q)$.
\end{coro}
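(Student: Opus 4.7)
The plan is to apply Lemma \ref{theoMT} with $P:=\mathrm{NS}(X)$ and then to exhibit a point in the corresponding period domain at which four conditions hold simultaneously: maximality of the Mumford-Tate group as in Lemma \ref{theoMT}, the equality $\mathrm{NS}(X')=P$ (not merely $P\subset\mathrm{NS}(X')$), projectivity of $X'$, and existence of a Lagrangian fibration deforming $f$. I would work inside the period domain $\mathcal{D}_P^0$ parameterizing marked deformations $X_t$ of $X$ with $P\subset\mathrm{NS}(X_t)$, as introduced in the proof of Lemma \ref{theoMT}.

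Lemma \ref{theoMT} already guarantees that outside a countable union of proper analytic subsets of $\mathcal{D}_P^0$, one has $\mathrm{MT}(H^2(X_t,\mathbb{Q})_{tr})=SO(H^2(X_t,\mathbb{Q})^{\perp P},q)$, which coincides with $\mathrm{MT}(H^2(X_t,\mathbb{Q}))$ as soon as $\mathrm{NS}(X_t)=P$, because $P$ is then the full space of Hodge classes in $H^2$. A parallel countability argument applied to each class $\beta\in H^2(X,\mathbb{Q})\setminus P$, whose Hodge locus in $\mathcal{D}_P^0$ is a proper analytic subset, shows that the very general $t$ satisfies $\mathrm{NS}(X_t)=P$ exactly. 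Thus the very general $t\in\mathcal{D}_P^0$ already meets two of the four required conditions.

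To secure the remaining two, I would restrict to a small Euclidean neighborhood $W\subset\mathcal{D}_P^0$ of the period point of $X$. Since $X$ is projective, $P$ contains an ample class $\alpha$; ampleness being open in the K\"ahler cone, $\alpha$ remains ample on every $X_t\in W$, so such an $X_t$ is projective. Simultaneously, the class $\ell=f^*A_B$ of the Lagrangian fibration lies in $P$ and hence remains of type $(1,1)$ throughout $\mathcal{D}_P^0$, so the deformation statement of \cite{voisinlag} recalled in the introduction produces a Lagrangian fibration $X_t\to B_t$ deforming $f$ over a dense Zariski open subset $U\subset\mathcal{D}_P^0$. Baire category then furnishes a point $t\in W\cap U$ avoiding all countably many ``bad'' analytic loci, and any such $t$ yields the desired $X'$.

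I do not anticipate a substantive obstacle: all four conditions are of a standard generic or Euclidean-open nature, and once the cited deformation result \cite{voisinlag} is invoked to handle the Lagrangian fibration, the argument reduces to Lemma \ref{theoMT} combined with openness of ampleness and the Baire category theorem.
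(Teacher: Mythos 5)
Your proposal is correct and follows essentially the same route as the paper: invoke Lemma \ref{theoMT} for the very general point of $\mathcal{M}_P$, note projectivity on a dense open set, and use the stability result of \cite{voisinlag} to deform the Lagrangian fibration for small deformations preserving $\mathrm{NS}(X)$. You merely make explicit some details the paper leaves implicit (the countability argument forcing $\mathrm{NS}(X_t)=P$ exactly, openness of ampleness, and the Baire-category intersection of the very general locus with the small neighborhood), all of which are sound.
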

\begin{proof}  By Lemma \ref{theoMT}, the very general $X'$ in
the family $\mathcal{M}_P$ of deformations of $X$ with
N\'eron-Severi group containing  $P$  has  Mumford-Tate group
$SO(H^2(X',\mathbb{Q})^{\perp P},q)$. Furthermore, $X'$ is also projective, at least on a
dense open
set of the deformation family.
On the other hand, it follows from the stability result of
\cite{voisinlag} that  deformations of $X$ preserving
${\rm NS}(X)$ locally preserve any Lagrangian fibration on $X$. So  if the deformation is small
enough, $X'$ admits a Lagrangian fibration deforming the one of $X$.
\end{proof}

Recall \cite{deligne}, \cite{kugasatake}, \cite{vangeemen} that a
polarized integral Hodge structure $H$ of weight $2$ with $h^{2,0}=1$ has an
associated Kuga-Satake variety $A_{KS}(H)$, which is an abelian variety with
the property that the Hodge structure $H$ can be realized (up to a
shift) as a sub-Hodge structure of the weight $0$ Hodge structure on
${\rm End}\,(H^1(A_{KS}(H),\mathbb{Z}))$. If $H$ is a rational polarized
Hodge structure, $A_{KS}(H)$ is  defined only up to isogeny.
The Kuga-Satake variety is essentially
constructed by putting, using the Hodge structure on $H$, a complex
structure on the underlying vector space of the  Clifford algebra
$C(H_\mathbb{R},q)$, which provides a complex structure on the real
torus $C(H_\mathbb{R},q)/C(H)$. In general, the Kuga-Satake is not a
simple abelian variety, because it has a big endomorphism algebra
given by right Clifford multiplication of $C(H)$ on this torus. The main ingredient in our proof
of Theorem \ref{theomatsuMT} will be the
following result:

\begin{prop} \label{propvgeemen} Let $(H,q)$ be a weight $2$
polarized Hodge structure with Mumford-Tate group equal to
$SO(q)$. Let $A,\,B$ be  polarized weight $1$ rational Hodge
structures such that $H\subset A\otimes B$ as weight $2$ Hodge
structures. Then if $A$ is simple (as a Hodge structure) and ${\rm dim}\,H\geq5$, $A$  is
isomorphic  as a rational Hodge structure to $H^1(M,\mathbb{Q})$,
where $M$ is an abelian subvariety of the Kuga-Satake variety of
$H$.
\end{prop}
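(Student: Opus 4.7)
The plan is to reduce the statement to representation theory of $\mathrm{Spin}(q)$ via the Mumford--Tate formalism. First I would replace $B$ by one of its simple summands onto which the inclusion $H\hookrightarrow A\otimes B$ projects nontrivially; such a summand exists because $H$ is itself irreducible as a Hodge structure, being the standard representation of the almost simple group $SO(q)$ (the hypothesis $\dim H\geq5$ ensures almost simplicity). Next, set $G:=\mathrm{MT}(H\oplus A\oplus B)$, a connected reductive $\mathbb{Q}$-group making the inclusion $H\subset A\otimes B$ equivariant and surjecting onto each of $SO(q)=\mathrm{MT}(H)$, $\mathrm{MT}(A)$, $\mathrm{MT}(B)$.

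By almost simplicity of $SO(q)$, the derived subgroup $G_{\mathrm{der}}$ decomposes up to isogeny as $\mathrm{Spin}(q)\times K$, where $K$ is a complementary semisimple factor acting trivially on $H$. Since $A$ is simple as a Hodge structure it is absolutely irreducible as a $G_{\mathrm{der}}$-representation; over an algebraic closure it therefore splits as an external tensor product $A=V\otimes W$, with $V$ an irreducible $\mathrm{Spin}(q)$-representation and $W$ an irreducible $K$-representation, and similarly $B=V'\otimes W'$. The inclusion $H\subset A\otimes B$ then amounts to the twin conditions that the standard representation of $SO(q)$ embeds in $V\otimes V'$ as a $\mathrm{Spin}(q)$-representation, and that the trivial representation of $K$ appears in $W\otimes W'$.

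The main step, and the principal obstacle, is the representation-theoretic assertion that the weight-$1$ Hodge structure constraint on $A$ forces $V$ to be a (half-)spin representation. The K3-type Hodge cocharacter acts on the real $2$-plane spanned by $(H^{2,0}\oplus H^{0,2})\cap H_\mathbb{R}$ as a rotation by angle $2\theta$; its lift $\tilde h$ to $\mathrm{Spin}(q)$, realised through the embedding $\mathrm{Spin}(q)\subset C(H,q)$ by the formula $\tilde h(e^{i\theta})=\exp(\theta\,e_1e_2)$, acts on an irreducible $\mathrm{Spin}(q)$-representation on a weight-$\lambda$ vector by a scalar of the form $e^{2i\langle\lambda,\varepsilon_1\rangle\theta}$. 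Requiring all such eigenvalues to lie in $\{e^{\pm i\theta}\}$ (the weight-$1$ condition) picks out precisely the weights with $\langle\lambda,\varepsilon_1\rangle=\pm1/2$, which among irreducible $\mathrm{Spin}(q)$-representations characterises the spin representation (in the odd-dimensional case) or the two half-spin representations (in the even-dimensional case). Combining this with the constraint that $V\otimes V'$ contains the standard representation of $SO(q)$---which is realized via the Clifford multiplication $H\otimes S\to S$---pins $V$ down uniquely.

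Finally, applying the weight-$1$ condition to $A=V\otimes W$ and using that $V$ is already weight $1$, the Hodge cocharacter must act trivially on $W$; the same applies to $W'$. Thus $h$ has trivial projection to the factor $K\subset G_{\mathrm{der}}$. Because $G$ is defined as the Zariski closure of $h(\mathbb{S}^1)$, no semisimple factor of $G_{\mathrm{der}}$ on which $h$ is trivial can occur, so $K$ is trivial. Hence $A\cong V$ as a rational Hodge structure, exhibiting $A$ as a sub-Hodge structure of $C(H,q)=H^1(A_{KS}(H),\mathbb{Q})$, so that $A\cong H^1(M,\mathbb{Q})$ for an abelian subvariety $M\subset A_{KS}(H)$, as required.
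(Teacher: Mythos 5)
Your overall strategy matches the paper's: pass to Mumford--Tate Lie algebras, use $\dim H\ge 5$ to make $so(q)$ simple, identify the $so(q)$-representation carried by $A$ as a (half-)spin representation, and then descend to get a rational embedding of $A$ into $H^1$ of the Kuga--Satake variety. Where you genuinely diverge is in the central representation-theoretic step. The paper invokes Deligne's classification (for a weight~$1$ polarized Hodge structure the only representations of $so(q)_{\mathbb C}$ that can occur are the standard and the (half-)spin ones) and then excludes the standard representation by a tensor-product decomposition argument: the standard representation is not a summand of (standard)$\otimes$(standard or half-spin), so $H\subset A\otimes B$ would be impossible. You instead compute directly with the lifted cocharacter $\exp(\theta e_1e_2)$: the weight-$1$ condition forces every weight $\lambda$ of the $\mathrm{Spin}(q)$-factor of $A_{\mathbb C}$ to satisfy $\langle\lambda,\varepsilon_1\rangle=\pm\tfrac12$, which characterizes the (half-)spin representations and simultaneously rules out the standard one. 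This is a legitimate and arguably more self-contained route (it reproves the relevant piece of Deligne's classification rather than citing it), and it also handles cleanly the degenerate case where $\mathrm{Spin}(q)$ might act trivially on $A$, since then $H$ would have to sit inside $B$ alone, contradicting weight~$1$ by the same eigenvalue count.

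Two points need repair, though neither is fatal. First, ``$A$ simple as a Hodge structure'' gives irreducibility as a $\mathbb{Q}$-representation of $\mathrm{MT}(A)$, not absolute irreducibility of $A_{\mathbb C}$ under $G_{\mathrm{der}}$ (CM factors and Galois-conjugate constituents are the standard counterexamples); you should run the weight argument on each irreducible $\mathbb C$-summand of $A_{\mathbb C}$, concluding only that every such summand is of (half-)spin type --- this is exactly the weaker statement the paper works with (``$A_{\mathbb C}$ must have a half-spin representation as summand''). Second, the conclusion ``$A\cong V$'' identifies a rational Hodge structure with a single complex irreducible representation, which is too strong; the correct final step, as in the paper, is to note that the space of $so(q)$-equivariant maps $A\to H^1(A_{KS}(H),\mathbb{Q})$ is a $\mathbb{Q}$-vector space whose complexification is nonzero (because half-spin constituents occur on both sides with the same cocharacter), hence contains a nonzero rational element, which is injective on $A$ by simplicity. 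With those adjustments your argument is sound.
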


\begin{proof}
The Mumford-Tate group $MT(A\otimes B)$ maps onto $MT(H)$. As ${\rm dim}\,H\geq5$,
the Lie algebra $mt(H)=so(q)$ is  simple,
so it is a summand of $mt(A\otimes B)$.
As $MT(A\otimes B)\subset MT(A)\times MT(B)$,  the
Lie algebra $mt(A\otimes B)$ is contained in  $mt(A)\times mt(B)$ and
the projection of the simple Lie algebra $mt(H)=so(q)$ to $mt(A)$ and to $mt(B)$ is injective.

If $mt(A\otimes B)$ contains both copies of $so(q)$ then the Mumford Tate group of the tensor product of the corresponding weight one sub-Hodge structures of $A$ and $B$ has  $so(q)\times so(q)$ as Lie algebra.
This contradicts that $A\otimes B$ has a sub-Hodge structure with $h^{2,0}=1$.
Using Proposition 1.7 of \cite{hazama}, one concludes that $mt(A\otimes B)$ contains one copy of
$so(q)$ which maps onto $mt(H)$ and whose projections to $mt(A)$ and to $mt(B)$ are injective.
The Hodge structures on $H$ and the sub-Hodge structures of $A$ and $B$ defined by this copy of $so(q)$ in $mt(A)\times mt(B)$ are obtained from one map of the Lie algebra of $\mathbb{S}^1$ to $so(q)_\mathbb{R}$.

Now one considers the classification of the cases where the
complex Lie algebra $so(q)_\mathbb{C}$  is a (simple) factor of the
complexified  Lie algebra of the  Mumford-Tate group of a weight $1$
polarized Hodge structure $A$ and then one finds all the possible
representations of $so(q)_\mathbb{C}$ on $A_\mathbb{C}$.
This was done by Deligne \cite{deligne2}.

The case where ${\rm dim}\, H$ is odd is the easiest one: in that
case the Lie algebra $so(q)_\mathbb{C}$ has a unique such
representation, which is the spin representation. This spin
representation also occurs on $H^1(A_{KS}(H),\mathbb{C})$,
with the same map of the Lie algebra of $\mathbb{S}^1$ to $so(q)_\mathbb{R}$.
Thus there is non-trivial $so(q)_\mathbb{C}$-equivariant map,
respecting the Hodge structures, from
$A_\mathbb{C}$ to $H^1(A_{KS}(H),\mathbb{C})$.
As the complex vector space of such maps is the complexification of the rational vector space of $so(q)$-equivariant maps from $A$ to $H^1(A_{KS}(H),\mathbb{Q})$,
there is such a map from $A$ to $H^1(A_{KS}(H),\mathbb{Q})$.
It follows that $A$ is a simple factor of the Hodge structure on $H^1(A_{KS}(H),\mathbb{Q})$.

In the case where ${\rm dim}\, H $ is even, the representations of
$so(q)_\mathbb{C}$ that can occur are the standard representation
and the two half spin representations.
However, the tensor product of the standard representation with any
of these three cannot have a subrepresentation which is again the
standard representation. Thus $H$ cannot be a summand of $A\otimes
B$ if $A_\mathbb{C}$ is the standard
representation of $so(q)_\mathbb{C}$. Therefore $A_\mathbb{C}$ must
have a half-spin representation of $so(q)_\mathbb{C}$ as summand. As before,
it follows that $A$ is a summand of the $H^1$ of the
Kuga-Satake variety of $H$.
\end{proof}

\section{Proof of the theorems }
We first prove that Theorem \ref{theomatsu} is a consequence of Theorem
\ref{theomatsuMT}.
Let $X$ be a projective hyper-K\"ahler manifold of dimension
 $2n$ with a Lagrangian fibration $f:X\rightarrow B$.
Then  by Lemma \ref{ledefMT}, there exists a point  (in fact many!) in the   space
$\mathcal{M}_P$ of deformations of
$X$ with constant Picard group which paramereterizes a projective hyper-K\"ahler
manifold
$X'$ such  that
 ${\rm NS}(X')={\rm NS}(X)$ and the Mumford-Tate group of the Hodge structure on
$H^2(X',\mathbb{Q})$ is the orthogonal group of
$(H^2(X',\mathbb{Q})_{tr},q)=(H^2(X,\mathbb{Q})_{tr},q)$. As we assumed that $b_{2,tr}(X)\geq 5$, the same
holds for $X'$. Hence Theorem \ref{theomatsuMT} applies to $X'$, which proves Theorem \ref{theomatsu}.

We now assume that $X=X'$ satisfies the  assumption in Theorem \ref{theomatsuMT} and
turn to the proof of Theorem \ref{theomatsuMT}.
\begin{proof}[Proof of Theorem \ref{theomatsuMT}] Let  $ f:X\rightarrow B$
be a Lagrangian
fibration with ${\rm dim}\,H^2(X,\mathbb{Q})_{tr}\geq5$, $B$ smooth and
$MT(H^2(X,\mathbb{Q})_{tr})=SO(H^2(X,\mathbb{Q})_{tr},q)$. We have to prove
 that $f$ satisfies Matsushita's conjecture, that is, if the general fiber
 of the moduli map $m$ is positive dimensional, then the moduli map is constant.
Let $b\in B$ be a general point and assume the fiber $F_b$ of the
moduli map $m$ passing through $b$ is positive dimensional. Over the
Zariski open set $U=F_b\cap B^0$ of $F_b$,  the Lagrangian
fibration restricts to an isotrivial fibration $X_U\rightarrow U$.
As we are in the projective setting, it follows that after passing
to a generically finite cover $U'$ of $U$, the base-changed family
$X_{U'}\rightarrow U'$ splits as a product $J_b\times U'$, where
the abelian variety $J_b$ is the typical fiber $f^{-1}(b)$, for $b\in U$. Let $F'_b$ be a
smooth projective completion of $U'$ and $X_{F'_b}$ be a smooth
projective completion of $X_{U'}$. The  natural rational map
$X_{F'_b}\dashrightarrow X$ induces a rational map $f_b:J_b\times
F'_b\dashrightarrow X$. Consider the induced morphism of Hodge
structures
$$f_b^*:H^2(X,\mathbb{Q})\rightarrow H^2(J_b\times F'_b,\mathbb{Q}).$$
We claim that the composite map
$$\alpha:H^2(X,\mathbb{Q})\rightarrow H^2(J_b\times F'_b,\mathbb{Q})\rightarrow H^1(J_b,\mathbb{Q})\otimes
H^1(F'_b,\mathbb{Q}),$$
where the second map is given by K\"unneth
decomposition,
has an injective restriction to $H^2(X,\mathbb{Q})_{tr}$.

This indeed follows from the following facts :

a) The Hodge structure on $H^2(X,\mathbb{Q})_{tr}$ is simple. Indeed, it is polarized with
$h^{2,0}$-number equal to $1$
 and
it does not contain nonzero Hodge classes. Hence if there is a nontrivial
sub-Hodge structure $H\subset H^2(X,\mathbb{Q})_{tr}$, it must have $H^{2,0}\not=0$. But then
the orthogonal complement $H^{\perp}\subset H^2(X,\mathbb{Q})_{tr}$ is either trivial or with
nonzero $(2,0)$-part, which contradicts the fact that $ H^{2,0}(X)$ is of dimension $1$.

 b) The $(2,0)$-form
$\sigma$ on $X$ has a nonzero image in $H^0(\Omega_{J_b})\otimes H^0(\Omega_{F'_b})$.
To see this last point, we recall that $J_b$ is Lagrangian, that is, the form $\sigma$ restricts to zero
on $J_b$. If it vanished also in $H^0(\Omega_{J_b})\otimes H^0(\Omega_{F'_b})$, its pull-back
to  $J_b\times F'_b$ would  lie
in $H^0(\Omega_{F'_b}^2)$. But as ${\rm dim}\,F_b>0$, this contradicts the fact that $\sigma$
 is nondegenerate and
${\rm dim}\,J_b=n=\frac{1}{2}{\rm dim}\,X$.
This proves  the claim since by b), the map $\alpha$ is nonzero and thus by a) it is injective.

The abelian variety $J_b$ might not be a simple abelian variety, (or equivalently, the
weight $1$ Hodge structure on $H^1(J_b,\mathbb{Q})$ might not be
simple), but the(polarized)  Hodge structure on $H^1(J_b,\mathbb{Q})$ is a
direct sum of simple weight $1$ Hodge structures
$$H^1(J_b,\mathbb{Q})\cong A_1\oplus\ldots\oplus A_s,$$
and for some $i\in\{1,\ldots s\}$ the induced morphism of Hodge
structures
$$\beta:H^2(X,\mathbb{Q})_{tr}\stackrel{\alpha}{\rightarrow} H^1(J_b,\mathbb{Q})\otimes
H^1(F'_b,\mathbb{Q})\rightarrow A_i\otimes H^1(F'_b,\mathbb{Q})$$
must be nonzero, hence  again injective by the simplicity of the Hodge
structure on $H^2(X,\mathbb{Q})_{tr}$.

 We are now in position to apply Proposition
\ref{propvgeemen} because $A_i$ is simple. We thus conclude that
$A_i$ is isomorphic to a direct summand of $H^1(A_{KS}(X),\mathbb{Q})$,
 where $A_{KS}(X)$ is the Kuga-Satake variety built on the Hodge
structure on $H^2(X,\mathbb{Q})_{tr}$. Equivalently, the abelian
variety $J_b$ contains a nontrivial abelian variety $T_b$ which is
isogenous to  an abelian subvariety of $K(X)$. As there are
finitely many isogeny classes of abelian subvarieties of $A_{KS}(X)$, we
conclude that $T_b$ in fact does not depend on the general point
$b$. Let us call $T$ this abelian subvariety of $A_{KS}(X)$. We now do the following:
for the general point $b\in B$, let $J'_b\subset J_b$ be the sum of
the abelian subvarieties of $J_b$ which are isogenous to $T$. (We
are allowed to do this because $X$ is projective, hence admits a
multisection, hence is isogenous to the associated Jacobian
fibration.) Over a Zariski open set $V$ of $B$, the subvarieties
$J'_b$ vary nicely in family, providing a sub-abelian fibration
$\mathcal{T}\subset X_V$. Using an ample line bundle on $X$, the
fibers $J_b,\,b\in V,$ then split canonically up to isogeny as a
direct sum
$$J_b\stackrel{isog}{\cong} J'_b\oplus J''_b,$$
and again the subvarieties $J''_b$ vary nicely in family, providing
a sub-abelian fibration $\mathcal{S}\subset X_V$. We then have an
isogeny
$$\mu:X_V\rightarrow \mathcal{S}\times_V\mathcal{T}.$$
We know that ${\rm dim}\,\mathcal{T}/V>0$.  If ${\rm
dim}\,\mathcal{S}/V>0$, then we get a contradiction as follows: We
know by \cite{oguiso} that ${\rm NS}\,(X_V/V)=\mathbb{Z}$. But if
$L$ is an ample line bundle on $X$, the pull-backs
$\mu^*L_{\mid\mathcal{T}}$ and $\mu^*L_{\mid \mathcal{S}}$ provide two linearly
independent divisor classes in ${\rm NS}\,(X_V/V)$. Hence we proved that ${\rm
dim}\,\mathcal{S}/V=0$, or equivalently $X_V=\mathcal{T}$. By
construction, $\mathcal{T}\rightarrow V$ is an isotrivial fibration,
so we proved that if the moduli map $m$ has positive dimensional
general fibers, then the fibration is isotrivial.

\end{proof}
\begin{rema}{\rm One may wonder if the hypothesis that $X$ is projective has really been used in
 the proof of Theorem \ref{theomatsuMT}. Indeed, even if $X$ is not projective, one knows that the
 fibers of a Lagrangian fibration are abelian varieties, and even canonically polarized abelian
 varieties. One has to be prudent however, because if the relative
 polarizations do not come from a line bundle
 on the total space $X$ but just form an integral degree $2$ cohomology class which is
 of type $(1,1)$ along the fibers, they do  not allow us to construct
 holomorphic multisections (which extends analytically over the singular fibers), and similarly
 for the relative splitting of the fibration. In the K\"ahler case,
  one can easily make $X$ projective by a small
 deformation preserving a given Lagrangian fibration, so it seems much safer to work with this assumption.
}
\end{rema}
\section{An example}
We construct in this section an example of a projective $K3$ surface
$S$, such that the Hodge structure $H$ on $H^2(S,\mathbb{Q})_{tr}$ can be
realized as a sub-Hodge structure of a tensor product $H_1\otimes H_2$, with
$H_1$ and $H_2$ of weight $1$, for a continuous family of weight $1$ polarized
Hodge structures $H_1$.

We start with a projective $K3$ surface
$S$ admitting a non-symplectic automorphism
$\phi$
of prime order $p\geq 5$ (see \cite{asartaki}, \cite{ogzhang} for construction and classification).
Let
$H=H^2(S,\mathbb{Q})_{prim}$.
\begin{prop}  There is a continuous family of polarized Hodge structures $H_1$
of weight $1$
such that
for some weight $1$ Hodge structure $H_2$, one has
$$H\subset H_1\otimes H_2$$ as Hodge structures.
\end{prop}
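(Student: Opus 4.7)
The plan is to exploit the action of $E := \mathbb{Q}(\zeta_p)$ on $H = H^2(S,\mathbb{Q})_{prim}$ coming from $\phi^*$. Let $V := \ker \Phi_p(\phi^*) \subset H$. Since $\phi$ has order $p$, there is a decomposition $H = H^\phi \oplus V$, and as $\phi$ is non-symplectic of prime order, the line $H^{2,0}(S)$ lies in $V_\mathbb{C}$. Thus $V$ becomes an $E$-vector space (with $\phi^*$ acting as multiplication by $\zeta_p$) carrying a weight $2$ sub-Hodge structure of $K3$ type with $h^{2,0}(V)=1$, while $H^\phi$ is entirely of Hodge type $(1,1)$. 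Complexifying, $V_\mathbb{C} = \bigoplus_\sigma V_\sigma$ decomposes according to the embeddings $\sigma : E \hookrightarrow \mathbb{C}$; the embedding $\sigma_0$ determined by the $\phi$-eigenvalue on $H^{2,0}(S)$ carries the line $V_{\sigma_0}^{2,0}$, while $V_\sigma$ for $\sigma \notin \{\sigma_0, \bar\sigma_0\}$ is purely of Hodge type $(1,1)$.

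Next, I would construct the family of weight $1$ Hodge structures $H_1$ as follows. Consider $E$-vector spaces $W$ of fixed $E$-dimension $d \geq 2$, equipped with an $E$-Hermitian form $\psi_W$ and a weight $1$ Hodge structure compatible with $\psi_W$ and the $E$-action. Fixing a signature $(a_\sigma, b_\sigma)$ at each archimedean place, with $b_{\bar\sigma} = a_\sigma$, $a_\sigma + b_\sigma = d$ and $a_{\sigma_0} b_{\sigma_0} > 0$, such Hodge structures are parameterized by a unitary Shimura variety of positive complex dimension $\sum_{\{\sigma, \bar\sigma\}} a_\sigma b_\sigma$. Taking $H_1$ to be the underlying polarized $\mathbb{Q}$-Hodge structure of $W$ (forgetting the $E$-action) produces the desired continuous family.

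For each $H_1$ in this family, I would construct a compatible $H_2$ so that $H \subset H_1 \otimes_\mathbb{Q} H_2$ as Hodge structures. Writing $E \otimes_\mathbb{Q} E \cong \prod_\tau E$ (since $E/\mathbb{Q}$ is Galois), the tensor $H_1 \otimes_\mathbb{Q} H_2$ splits as a product of $E$-modules, and the $E$-equivariant embedding of $V$ (with $E$ acting on the first factor) is engineered by matching Hodge pieces in $(H_1 \otimes_\mathbb{Q} H_2)_\mathbb{C} = \bigoplus_{\sigma, \tau} H_{1,\sigma} \otimes H_{2,\tau}$: the line $V_{\sigma_0}^{2,0}$ lands in $H_{1,\sigma_0}^{1,0} \otimes H_{2,\tau_0}^{1,0}$ for a suitable choice of $\tau_0$ (possible since both factors have positive $(1,0)$-dimension by our signature choice), the complex conjugate line lands in the $\bar\sigma_0$-counterpart, and the remaining $(1,1)$-parts go into the appropriate cross-type summands. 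Construct $H_2$ as another $E$-Hodge structure with signature complementary to that of $H_1$, and absorb the $(1,1)$-only summand $H^\phi$ by tensoring with a fixed auxiliary weight $1$ Hodge structure.

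The main obstacle is verifying that $H_2$, built from $H_1$ and $V$, is a genuine polarized weight $1$ $\mathbb{Q}$-Hodge structure, and that the rational embedding $H \subset H_1 \otimes H_2$ persists as $H_1$ varies over its Shimura family. This reduces to a careful dimensional and Hodge-type bookkeeping at each embedding $\sigma$ of $E$, together with a compatibility check between the Hermitian polarizations of $H_1$, $H_2$ and the polarization on $H$. Once this is in place, the positive-dimensional unitary Shimura variety delivers the required continuous family of $H_1$, and the condition $p \geq 5$ ensures $[E:\mathbb{Q}] = p-1 \geq 4$, leaving enough room in the signature data for the construction.
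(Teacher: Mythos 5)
Your starting point is the same as the paper's --- exploit the order-$p$ non-symplectic automorphism, equivalently the $\mathbb{Q}(\zeta_p)$-eigenspace structure on $H$, and observe that for $p\geq 5$ the eigenvalue/signature data leaves enough freedom to produce a positive-dimensional family of weight~$1$ structures $H_1$. But the heart of the proposition is the construction of $H_2$ together with a morphism of \emph{rational} Hodge structures $H\hookrightarrow H_1\otimes H_2$, and this is exactly the step you leave as ``careful dimensional and Hodge-type bookkeeping.'' As described, your plan builds the embedding summand-by-summand over $\mathbb{C}$, placing $V^{2,0}_{\sigma_0}$ into $H_{1,\sigma_0}^{1,0}\otimes H_{2,\tau_0}^{1,0}$ and distributing the $(1,1)$-pieces among cross-type summands. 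That does not by itself produce a $\mathbb{Q}$-linear map: a single rational (or $E$-linear) map determines simultaneously where every complex eigenspace goes, so you cannot choose these placements independently and then hope they glue; you would need to exhibit a Galois-equivariant system of maps and verify it respects the Hodge filtration, which is precisely the unproved content. The acknowledged ``main obstacle'' is the theorem.

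The paper closes this gap with a device you are missing: it does not build $H_2$ by hand but defines it canonically as $H_2:=(H_1\otimes H)^{G}$, the invariants under $G=\mathbb{Z}/p\mathbb{Z}$ acting by $\psi'\otimes\psi$. This is automatically a rational sub-Hodge structure; the eigenvalue condition that $\lambda^{-1}$ does not occur on $H_1^{1,0}$ kills the $(3,0)$-part, so $H_2$ is a Tate twist of a weight-$1$ structure, and the condition that $\lambda^{-1}$ does occur on $H_1^{0,1}$ makes it nonzero. The embedding then comes for free: the tautological inclusion $H_2\hookrightarrow H_1\otimes H$ composed with the contraction $H_1^*\otimes H_1\to\mathbb{Q}$ gives a nonzero morphism $H_1^*\otimes H_2\to H$, and polarizations plus the simplicity of $H$ convert this into an injection $H\hookrightarrow H_1^*\otimes H_2$. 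Note also that your eigenvalue conditions differ from the paper's (you ask for $a_{\sigma_0}b_{\sigma_0}>0$ at the place carrying $H^{2,0}$, whereas the relevant conditions concern the place of $\lambda^{-1}$), and that the paper works directly with $H$ and its simplicity rather than splitting off $H^{\phi}$ and ``absorbing'' it separately. To complete your argument you would either have to carry out the rational descent in detail or adopt the invariant-theoretic construction of $H_2$; as it stands the proof is a plausible plan with the decisive step unexecuted.
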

\begin{proof}
Let $\lambda\not=1$ be the eigenvalue of $\psi=\phi^*$ acting on $H^{2,0}(S)$.
Let $H_1$ be  any weight $1$ polarized Hodge structure
admitting an automorphism $\psi'$ of order $p$ such that
\begin{enumerate}
\item \label{it1}
$\lambda^{-1}$ is not an eigenvalue of $\psi'$ acting on
$H_1^{1,0}$.

\item\label{it2}  $\lambda^{-1}$ is an eigenvalue of $\psi'$
acting on $H_1^{0,1}$.
\end{enumerate}
For such $H_1$, we find that
the weight $3$ Hodge structure
$$
H_2:=(H_1\otimes H)^{G},
$$
where $G$ is $\mathbb{Z}/p\mathbb{Z}$ acting on $H\otimes H_1$ via
$\psi\otimes \psi'$, is the Tate twist of a weight $1$ Hodge structure
$H_2$, since we have
$$
((H_1\otimes H)^{G})^{3,0}=(H_1^{1,0}\otimes H^{2,0})^G= (H_1^{1,0})^{\lambda^{-1}}\otimes H^{2,0}=0.
$$
On the other hand, $H_2$ is nonzero, since $\lambda^{-1}$ is an eigenvalue of $\psi'$
acting on $H_1^{0,1}$, which by the same argument as above provides a nonzero element
in $(H_1^{0,1}\otimes H_2^{2,0})^G$.

By composing the inclusion
$H_1^*\otimes H_2\hookrightarrow H_1^*\otimes H_1\otimes H$ with the contraction map
$H_1^*\otimes H_1\rightarrow {\mathbb Q}$,
we get a map $H_1^*\otimes H_2\rightarrow H$.
This map is non-trivial, since choosing nonzero
$\sigma\in (H^{0,1}_1)^{\lambda^{-1}}$ and $\eta\in H^{2,0}$
we have $\sigma\otimes\eta=i(\omega)$ for some $\omega \in H_2$.
Next we choose $u\in H_1^*$ such that $u(\sigma)\neq 0$,
then we see that, after tensoring with ${\mathbb C}$,
$u\otimes \omega\mapsto u\otimes\sigma\otimes \eta\mapsto
u(\sigma)\eta\neq 0$.

Since these Hodge structures are polarized, they are isomorphic to their duals up to Tate twists. Thus there is a nontrivial
morphism of Hodge structures
$$H\rightarrow H_1^*\otimes H_2$$
that is injective by the simplicity of the Hodge structure $H$.

We conclude observing that  by the assumption $p\geq 5$, the family of weight $1$ polarized
Hodge structures $H_1$ satisfying conditions \ref{it1} and \ref{it2} above has positive dimension.
\end{proof}

Bert van Geemen

Dipartimento di Matematica

Universit\`a di Milano

Via Saldini 50

20133 Milano

Italia

\smallskip
lambertus.vangeemen@unimi.it

\smallskip\smallskip

Claire Voisin

Institut de Math\'ematiques de Jussieu

 4 place Jussieu

Case 247

75252 Paris Cedex 05

 France

\smallskip
 claire.voisin@imj-prg.fr
    \end{document}